\newtheorem{theorem}{Theorem}[section]
\newtheorem{lemma}{Lemma}[section]
\newtheorem{proposition}{Proposition}[section]
\theoremstyle{remark}
\newcommand\mydef{\mathrel{\stackrel{\makebox[0pt]{\mbox{\small def}}}{=}}}
\title[Recovering nonlinear magnetic potential from partial data on Riemann surface]
{A Note on the Partial data inverse problem for a nonlinear magnetic Schr\"odinger operator on Riemann Surface}
\begin{document}

\author[Y. Ma]{Yilin Ma}
\address[Y. Ma]%
{F07-CARSLAW BUILDING, THE UNIVERSITY OF SYDNEY, CHIPPENDALE NSW, AUSTRALIA}
\email{K.Ma@maths.usyd.edu.au}

\begin{abstract}
We recover a nonlinear magnetic Schr\"odinger potential from measurement on an arbitrarily small open subset of the boundary on a compact Riemann surface. We assume that the magnetic potential satisfies suitable analytic properties, in which case the recovery can be obtained by a linearisation argument. The proof relies on the complex analytic methods introduced in \cite{leocalderon}.
\end{abstract}

\maketitle

\noindent {\sl Keywords:}  Calder\'on Problem, Nonlinear magnetic Schr\"odinger operator, Riemann surface

\vskip 0.2cm
\noindent {\sl AMS Subject Classification (2020):}    35R30  \
\setcounter{equation}{0}

\medskip

\noindent {\bf Acknowledgments.} This work is completed as part of the author's HDR degree while under the support of A/Prof Tzou's projects ARC DP190103451 and ARC DP190103302. The author is grateful for the supervision he has received. 
\medskip

\section{Introduction}

Let $(M,g)$ be a compact Riemann surface with smooth boundary $\partial M$. For a complex parameter $\zeta \in \mathbb{C}$ and some $\alpha > 0$, we let $(X(\zeta, \cdot) ,V(\zeta, \cdot) )$ be in $\mathcal{C}^{2,\alpha}(T^{\star}M) \times \mathcal{C}^{1,\alpha}(M)$ depending analytically in $\zeta$, in the sense that
\begin{alignat}{2} \label{holomorphic condition} 
X(\zeta,p) \ \mydef \ \sum_{k \geq 0} \frac{\zeta^{k}}{k!} X_k(p) \ \ \text{and} \ \ V(\zeta,p) \ \mydef \ \sum_{k \geq 0} \frac{\zeta^{k}}{k!} V_{k}(p)
\end{alignat}
with convergence respectively in the $\mathcal{C}^{2,\alpha}(T^{\star}M)$ and $\mathcal{C}^{1,\alpha}(M)$ topologies and $p \in M$. For each $k \geq 0$, the pair $(X_{k}, V_{k})$ belongs to $ \mathcal{C}^{2,\alpha}(T^{\star}M) \times \mathcal{C}^{1,\alpha}(M) $, and we may consider a differential operator
\begin{alignat}{2} \label{magnetic schordinger operator}
L_{X,V}u \ \mydef \ - \star ( d \star + i X(u,p) \wedge \star )(d+ i X(u,p))u + + V(u,p)
\end{alignat}
where $\star$ denotes the Hodge star operator on $(M,g)$. One refers to $L_{X,V}$ as the \emph{magnetic Schr\"odinger operator} with magnetic potential $(X,V)$. It is known from \cite{guntherNMS} that if $X_0 = 0$ and $V_{0} = V_{1} = 0$, then there exists $\delta > 0$ small and a constant $C>0$ such that if we set 
\begin{alignat*}{2}
& U_{\delta} \ \mydef \ \{ f \in \mathcal{C}^{2,\alpha}(\partial M) \ / \ \| f \|_{\mathcal{C}^{2,\alpha}(\partial M)} <\delta \}\ \ \text{and} \\
 & \hspace*{3.8mm} V_{C,\delta}  \ \mydef \ \{ u \in \mathcal{C}^{2,\alpha}(M) \ / \ \| u \|_{\mathcal{C}^{2,\alpha}(M)} < C \delta \},
\end{alignat*} 
then for every $f \in U_{\delta}$ there exists $u_f \in V_{C, \delta}$ with ${u_{f}}_{|_{\partial M}} = f$ such that $L_{X,V}u_f = 0$. As such we may define the \emph{Dirichlet-Neumann map} by
\begin{alignat*}{2}
\Lambda_{X,V}:
\begin{cases}
U_{\delta} \rightarrow \gamma\nabla^{X}_{\nu}(V_{C,\delta}), \\
\hspace*{1.7mm} f \mapsto {\nabla^{X}_{\nu} u_f }_{|_{\partial M}},
\end{cases}
\end{alignat*}
where $ \nabla^{X}u = du + i uX(u,p) $, $ \nabla^{X}_{\nu}u = \nabla^{X}u(\nu) $ by definition where $\nu$ is the outward point unit normal, and $\gamma$ is the boundary trace map. The inverse problem asks whether $\Lambda_{X,V}$ uniquely determines $(X,V)$. However, in practice one might only have access to measurements on a small portion of the boundary. Hence if $\Gamma \subset \partial M$ is an arbitrary non-empty open subset, one could asks whether the \emph{partial data Dirichlet-Neumann map}
\begin{alignat*}{2}
\Lambda^{\Gamma}_{X,V}:  
\begin{cases}
U_{\delta} \cap \mathcal{E}'(\Gamma) \rightarrow \gamma \nabla^{X}_{\nu} (V_{C,\delta}) \cap \mathcal{E}'(\Gamma) \\
\hspace*{15mm} f \mapsto {\nabla^{X}_{\nu} u_f}_{|_{\Gamma}}
\end{cases}
\end{alignat*}
determines $(X,V)$. In this article we will give positive answer to this inverse problem, the precise formulation of which is the following:
\begin{theorem} \label{main theorem}
Let $ (X_j(\zeta, \cdot ), V_j(\zeta, \cdot)) \in \mathcal{C}^{2,\alpha}(T^{\star}M) \times \mathcal{C}^{1,\alpha}(M)$, $j=1,2$ be pairs depending holomorphically on a complex parameter $\zeta \in \mathbb{C}$ in the sense that
\begin{alignat}{2} \label{holomorphic condition 2} 
X_j(\zeta,p) \ \mydef \ \sum_{k \geq 0} \frac{\zeta^{k}}{k!} X_{j,k}(p) \ \ \text{and} \ \ V_j(\zeta,p) \ \mydef \ \sum_{k \geq 0} \frac{\zeta^{k}}{k!} V_{j,k}(p).
\end{alignat}
Assume that $X_{j,0} = 0$ and $V_{j,0} = V_{j,1} = 0$. If $ \Lambda^{\Gamma}_{X_1, V_1} = \Lambda^{\Gamma}_{X_2,V_2} $ where $\Gamma$ is an arbitrarily small non-empty open subset of $\partial M$, then $(X_1, V_1) = (X_2, V_2)$.
\end{theorem}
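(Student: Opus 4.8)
\medskip
\noindent\textbf{Strategy of proof.} The plan is to resolve the problem by higher-order linearisation, converting the identity $\Lambda^{\Gamma}_{X_1,V_1}=\Lambda^{\Gamma}_{X_2,V_2}$ into a hierarchy of linear integral identities over $M$ in products of harmonic functions, each settled by the complex-analytic framework of \cite{leocalderon} for the Laplacian on a Riemann surface with data on an open subset $\Gamma\subset\partial M$. By the well-posedness quoted from \cite{guntherNMS} and the holomorphic dependence \eqref{holomorphic condition 2}, the solution map $f\mapsto u_f$ is real-analytic near $0$ with $u_0=0$. For Dirichlet data $f=\sum_{k=1}^{N}\varepsilon_k h_k$ with $h_k\in\mathcal E'(\Gamma)$ and $\varepsilon_k$ small and real, put $v_k:=\partial_{\varepsilon_k}u_f\big|_{\varepsilon=0}$ and $w_N:=\partial_{\varepsilon_1}\!\cdots\partial_{\varepsilon_N}u_f\big|_{\varepsilon=0}$. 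Because $X_{j,0}=0$ and $V_{j,0}=V_{j,1}=0$, every term of $L_{X,V}u$ past the leading one has vanishing first Fr\'echet derivative at $u=0$, so the linearisation of $L_{X,V}$ at $0$ is the Laplace--Beltrami operator $\Delta_g$; hence each $v_k$ is harmonic with $v_k|_{\partial M}=h_k$, and differentiating $L_{X,V}u_f=0$ repeatedly shows $w_N$ solves
\begin{equation*}
\Delta_g w_N=F_N\ \ \text{in }M,\qquad w_N|_{\partial M}=0,
\end{equation*}
where $F_N$ is a universal polynomial expression in the $v_k$, the lower-order mixed derivatives, and the coefficients $X_{j,\ell}$ ($\ell\le N-1$), $V_{j,\ell}$ ($\ell\le N$), with the top coefficients $X_{j,N-1}$ and $V_{j,N}$ entering $F_N$ linearly. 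Differentiating $\Lambda^{\Gamma}_{X_1,V_1}=\Lambda^{\Gamma}_{X_2,V_2}$ in the $\varepsilon_k$ gives, at order $N$, equality on $\Gamma$ of the matching Cauchy data, including the contribution of $iuX(u,p)(\nu)$ to $\nabla^{X}_{\nu}$.

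Next I would induct on $N$, the case $N=1$ being vacuous since both first linearisations are harmonic with trace $h_1$. Assume $X_{1,\ell}=X_{2,\ell}$ for $\ell\le N-2$ and $V_{1,\ell}=V_{2,\ell}$ for $\ell\le N-1$; then the lower-order mixed derivatives coincide for both operators, so subtracting the two order-$N$ equations leaves only the terms linear in $Y:=X_{1,N-1}-X_{2,N-1}$ and $Q:=V_{1,N}-V_{2,N}$. Writing $W$ for the difference of the order-$N$ mixed derivatives, one gets, schematically,
\begin{equation*}
\Delta_g W=c_1\star d\star\!\big((v_1\cdots v_N)Y\big)+c_2\langle Y,d(v_1\cdots v_N)\rangle+c_3\,(v_1\cdots v_N)Q\ \ \text{in }M,\qquad W|_{\partial M}=0,
\end{equation*}
with explicit constants $c_i$ and with a known multiple of $(v_1\cdots v_N)Y(\nu)$ added to $\partial_\nu W$ vanishing on $\Gamma$. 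Pairing with an arbitrary harmonic $v_0$ and integrating by parts converts this into: for all harmonic $v_0,\dots,v_N$ on $M$,
\begin{equation*}
\int_{M}\Big(\langle Y,\Xi(v_0,\dots,v_N)\rangle+Q\,v_0\cdots v_N\Big)\,dV_g=\mathcal B(v_0,\dots,v_N),
\end{equation*}
where $\Xi$ is a multilinear first-order differential expression in the $v_j$ and $\mathcal B$ gathers boundary terms supported on $\partial M\setminus\Gamma$. Taking the $v_j$ real-valued and separating real and imaginary parts decouples the recovery of $Q$ from that of $Y$.

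The analytic heart is to deduce $Q\equiv0$ and $Y\equiv0$. Following \cite{leocalderon}, for an interior point $p_0$ of $M$ I would choose a Morse holomorphic function $\Phi$ with a nondegenerate critical point at $p_0$ and build harmonic complex-geometric-optics solutions $v_j=e^{\pm\Phi/h}(a_j+h\,r_j)$, the amplitudes $a_j$ holomorphic or antiholomorphic and adapted to probe the $(1,0)$- and $(0,1)$-parts of $Y$, whose traces on $\partial M\setminus\Gamma$ are negligible as $h\to0$. This last property is precisely the partial-data Carleman estimate for $\Delta_g$, which, combined with an energy estimate for $W$ (zero Dirichlet data, source controlled by the $v_j$), forces $\mathcal B\to0$. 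A Bukhgeim-type stationary phase argument then localises the bulk integrals at $p_0$, and letting the amplitudes vary recovers $Q(p_0)$ and the full $1$-form $Y(p_0)$; as $p_0$ sweeps the interior of $M$ this gives $Q\equiv0$ and $Y\equiv0$, closing the induction. Together with $X_{j,0}=0$ and $V_{j,0}=V_{j,1}=0$ this makes all Taylor coefficients in \eqref{holomorphic condition 2} agree, whence $(X_1,V_1)=(X_2,V_2)$.

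I expect two points to be the real obstacles. First, the partial-data accounting: one must ensure that the unobserved Cauchy data of $W$ on $\partial M\setminus\Gamma$ is quantitatively dominated by the decay there of the CGO traces, which pins down the sign geometry required of $\operatorname{Re}\Phi$ and is exactly where the Carleman estimate of \cite{leocalderon} is essential. Second --- the genuinely nonlinear point --- one must verify that the order-$N$ identity determines the $1$-form $Y$ itself and not merely $dY$: the linear magnetic inverse problem carries a gauge ambiguity $Y\mapsto Y+d\psi$, and it is the presence of a product of three (rather than two) harmonic solutions, forced on us by the nonlinear coupling, that should eliminate it. Carrying this through, with the combinatorial bookkeeping confirming that the top coefficients enter $F_N$ linearly while the inductive hypothesis annihilates everything else, is where the work lies.
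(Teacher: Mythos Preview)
Your linearisation-plus-induction framework is correct and matches the paper's architecture: differentiate the DN map, use $X_{j,0}=0$, $V_{j,0}=V_{j,1}=0$ to make the first linearisation the Laplacian, and at order $N$ isolate $Y=X_{1,N-1}-X_{2,N-1}$ and $Q=V_{1,N}-V_{2,N}$ inside an integral identity in products of harmonic functions. Where you diverge from the paper is the analytic core. You propose Bukhgeim-type CGO with a Morse holomorphic phase, stationary phase at interior critical points, and a Carleman estimate to suppress the contribution on $\partial M\setminus\Gamma$. The paper does none of this. It works exclusively with harmonic functions that \emph{exactly} vanish on $\partial M\setminus\Gamma$ (supplied by \cite[Proposition~2.1]{leocalderon}: a holomorphic $F$ that is real on $\partial M\setminus\Gamma$, so $e^{\lambda F}-e^{\lambda\bar F}$ is harmonic and vanishes there). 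This makes your boundary remainder $\mathcal B$ identically zero with no asymptotics and no Carleman weight. A permutation of two of the harmonic factors then produces a symmetric identity to which a short boundary-concentration argument (Lemma~\ref{estimates for harmonic function} / Proposition~\ref{integral identity for boundary determination 2}) applies, yielding $Y(\nu)=Y(\tau)=0$ on $\Gamma$ directly. After this, an integration by parts and the \emph{linear} partial-data result \cite[Proposition~5.1]{leocalderon} upgrade the integral identity to the pointwise relation $Y\wedge\star du=uQ$ for every such harmonic $u$; plugging in $u=e^{\lambda F}-e^{\lambda\bar F}$, dividing by $\lambda$, and invoking Riemann--Lebesgue kills the $(1,0)$ and $(0,1)$ parts of $Y$ separately (Lemma~\ref{identification result 1}). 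No stationary phase, no Morse critical points.

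What each approach buys: the paper's route is markedly shorter and sidesteps both of the obstacles you flag. Because the test harmonic functions have trace in $\mathcal E'(\Gamma)$, they lie in the actual domain of $\Lambda^{\Gamma}$, so the partial-data bookkeeping is exact rather than asymptotic; and the gauge issue for $Y$ never arises because the pointwise identity $Y\wedge\star du=uQ$ already determines $Y$ (not just $dY$) once one feeds in $e^{\lambda F}$. Your plan, by contrast, uses CGO whose traces are only \emph{small} on $\partial M\setminus\Gamma$; but $\Lambda^{\Gamma}_{X_j,V_j}$ is defined only on $U_\delta\cap\mathcal E'(\Gamma)$, so you cannot literally apply the hypothesis to such data without an additional density or extension argument that you have not supplied---this is the one place where your proposal is genuinely incomplete rather than merely different. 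If you patch that (e.g.\ by approximating with compactly supported data and controlling the nonlinear solution map), a Bukhgeim-style argument should go through, but it will be substantially longer than the paper's two-page Riemann--Lebesgue trick.
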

The Calder\'on problem for elliptic equations has been intensively studied since the groundbreaking result \cite{calderonfirstresult} of Sylvester-Uhlmann. Since them, much attentions have been paid to the study of Calder\'on problem for the linear magnetic Schr\"odinger operator. See \cite{21,19,5,14,22} and in particular \cite{leoconnection,reflectionpaper} for results on Riemann surface. The list is not exhaustive. In the case where $X = 0$ and a nonlinear potential $V(u,p)$ is present. Lassas-Liimatainen-Lin-Salo observed in \cite{Lassasnonlinear} that recovery is possible provided $V(u.p) = u^{k}V(p)$, $k \geq 2$ is of power type and the underlying geometry is Euclidean with dimension $n \geq 2$. By linearising the Dirichlet-Neumann map, they showed that the recovery of $V$ amounts to solving the linearised Calder\'on problem. By taking successive linearisations, Krupchyk-Uhlmann in \cite{guntherNLC} gave positive answer to the partial data inverse problem of a nonlinear $V(u,p)$ satisfying condition (\ref{holomorphic condition}) with $V_{0} = V_{1} = 0$. The same was done on Stein manifolds with K\"ahler metric by Ma-Tzou in \cite{Ma}. If in addition $V_2$ is known, then Feizmohammadi-Oksanen showed in \cite{CTANCP} that the full data inverse problem for $V(u,p)$ can be solved on conformally anisotropic manifolds (CTA) with admissible transversed manifolds. \par 
In the case where $X \neq 0$ and is nonlinear, the boundary integral identities obtained after linearising the Dirichlet-Neumann maps involve first order derivatives which is different from the standard linearised Calder\'on problem. Under the condition of vanishing lower order terms, Lai-Zhou in their recent article \cite{LaiNMS} recovered $(X,V)$ from partial data in the Euclidean settings for all dimensions $n \geq 2$. In another result \cite{guntherNMS}, Krupchyk-Uhlmann solved the relevant full data problem on CTA manifolds of dimensions $n \geq 3$. Since the Calder\'on problem for the linear Schr\"odinger equation in two dimensions has historically been reliant on complex analytic techniques (see \cite{leocalderon} and the references therein), it is encouraging to see whether these powerful methods can be applied to the current problem and in particular fill the gap for the case of two dimensional non-Euclidean geometry. As it turns out, in this case the key obstacle encountered by \cite{guntherNMS, LaiNMS} is resolved by an incredibly short argument.
\section{Recovery from Integral Identities}
\noindent In this section we state the main recovery results to be used in the proof of Theorem \ref{main theorem}. 
\begin{proposition} \label{identification}
Let $X,Q \in \mathcal{C}^{1,\alpha}(T^{\star}M)$ be such that
\begin{alignat}{2} \label{boundary integral identity 1}
\int_{M} u_1 u_2 X \wedge \star du_{3} = \int_{M} u_1 u_2 u_3 Q
\end{alignat}
for all harmonic functions $u_1, u_2$ and $u_3$ which vanish on $\partial M \backslash \Gamma$, then $X = Q = 0$. 
\end{proposition}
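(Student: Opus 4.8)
The plan is to use special families of harmonic functions—complex geometric optics (CGO) solutions—on the Riemann surface to extract pointwise information from the integral identity (\ref{boundary integral identity 1}). Since $M$ is a Riemann surface, for any point $p_0$ in the interior I would fix a local holomorphic coordinate $z$ near $p_0$ and use Morse holomorphic functions $\Phi$ with a nondegenerate critical point at $p_0$, as constructed in \cite{leocalderon}. The associated CGO solutions take the form $u = e^{\tau\Phi}(a + r_\tau)$ with $a$ holomorphic, $\overline{\partial}a$ vanishing suitably, and $\|r_\tau\| \to 0$ as $\tau \to \infty$; crucially, since $\Gamma$ is a nonempty open subset of $\partial M$, one can arrange (again following \cite{leocalderon}) that these solutions vanish on $\partial M \setminus \Gamma$, so they are admissible test functions in (\ref{boundary integral identity 1}).

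The key step is a stationary phase / Laplace-type asymptotic analysis. I would choose $u_1 = e^{\tau\Phi}(a_1 + r_1)$ and $u_2 = e^{\tau\Phi}(a_2 + r_2)$ with the same phase $\Phi$ concentrating at $p_0$, and $u_3 = e^{-\tau\overline{\Phi}}(b + r_3)$ (or more precisely a harmonic function whose leading exponential cancels the growth of $u_1u_2$ away from $p_0$ while leaving Gaussian decay). Then both sides of (\ref{boundary integral identity 1}) become integrals with a phase of the form $\tau(2\Phi - \overline{\Phi} + \cdots)$ having a critical point at $p_0$; multiplying by the appropriate power of $\tau$ and letting $\tau \to \infty$, stationary phase isolates the value of the integrand at $p_0$. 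On the left, $X \wedge \star du_3$ contributes a term involving the $(1,0)$- or $(0,1)$-component of $X$ at $p_0$ paired against $\partial\Phi(p_0)$ or $\overline{\partial\Phi}(p_0)$, while the right-hand side, lacking the derivative, enters at a lower order in $\tau$ and drops out of the leading asymptotics. This forces the relevant component of $X$ to vanish at $p_0$; running the argument with the conjugate phase $\overline{\Phi}$ (or with $b$ chosen antiholomorphic) kills the other component, so $X(p_0) = 0$. Since $p_0$ is arbitrary in the interior and $X$ is continuous up to the boundary, $X \equiv 0$. With $X = 0$, the identity reduces to $\int_M u_1 u_2 u_3 Q = 0$ for all such harmonic triples, and the same stationary phase argument—now at the principal order, since there is no competing derivative term—yields $Q(p_0) = 0$ for all $p_0$, hence $Q \equiv 0$.

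The main obstacle I anticipate is the bookkeeping of orders in $\tau$: one must verify that the derivative $du_3$ produces exactly one extra power of $\tau$ relative to the undifferentiated term, so that the $X$-term and the $Q$-term genuinely live at different orders and can be separated, and simultaneously that the remainder terms $r_\tau$ (and cross terms involving them) are negligible after multiplication by the normalizing power of $\tau$. This requires the precise decay rates $\|r_\tau\|_{L^2} = o(1)$, and ideally $\|r_\tau\|_{H^1} = O(1)$ or better, from the Carleman estimates underlying the CGO construction in \cite{leocalderon}. A secondary technical point is handling the critical points of the Morse holomorphic function away from $p_0$, whose contributions must be shown to either cancel or be subleading—this is standard in the two-dimensional Calderón literature and I would invoke it rather than reprove it.
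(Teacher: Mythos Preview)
Your approach has a genuine gap at the order-counting step. The CGO solutions from \cite{leocalderon} are built on Morse holomorphic phases $\Phi$ whose localisation mechanism is precisely the nondegenerate critical point at $p_0$; but this forces $d\Phi(p_0)=0$. When you compute $X\wedge\star du_3$ with $u_3=e^{-\tau\bar\Phi}(b+r_3)$, the ``extra power of $\tau$'' you rely on appears in the term $-\tau\, e^{-\tau\bar\Phi}\,b\,X\wedge\star\bar\partial\bar\Phi$, whose amplitude $\bar\partial\bar\Phi$ vanishes at the stationary point. In two dimensions stationary phase at a nondegenerate critical point contributes $c\tau^{-1}f(p_0)+O(\tau^{-2})$; when the amplitude vanishes to first order at $p_0$ the leading term drops to $O(\tau^{-2})$. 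Thus your $X$-term gives $\tau\cdot O(\tau^{-2})=O(\tau^{-1})$, exactly the same order as the $Q$-term on the right and as the undifferentiated piece $X\wedge\star db$ on the left. The two sides do not separate in $\tau$, so you cannot read off $X(p_0)$ first and $Q(p_0)$ afterwards as you claim. (There is a secondary issue: your stated combined phase $2\Phi-\bar\Phi$ has real part $\mathrm{Re}\,\Phi$, a nonconstant harmonic function, so the product $u_1u_2u_3$ is not even bounded as $\tau\to\infty$; rebalancing, say to $u_3\sim e^{-2\tau\bar\Phi}$, is easy but does not cure the order problem.)

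The paper sidesteps this obstruction by a two-stage reduction that your proposal is missing. First it invokes the density of products $u_1u_2$ (Proposition~\ref{linear Calderon problem}) to pass from the integral identity to the \emph{pointwise} identity $X\wedge\star du_3=u_3Q$ for every harmonic $u_3$ vanishing on $\partial M\setminus\Gamma$. With only one harmonic function left, no stationary-phase localisation is needed: one takes $u_3=e^{\lambda F}-e^{\lambda\bar F}$ for a holomorphic $F$ that is real on $\partial M\setminus\Gamma$ (no critical point required), multiplies the pointwise identity by $e^{-\lambda F}v$ for an \emph{arbitrary} $v\in L^1(M)$, integrates, divides by $\lambda$, and applies Riemann--Lebesgue (Lemma~\ref{Riemann-Lebesgue}). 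This isolates $\int_M v\,A\wedge\partial F=0$ for the $(0,1)$-part $A$ of $X$, and since $v$ is arbitrary and $\partial_zF$ has zero set of measure zero one obtains $A=0$, and by conjugation $B=0$; then $u_3Q=0$ gives $Q=0$. The decisive difference from your plan is the preliminary reduction to a pointwise identity, after which one may test against arbitrary $L^1$ functions rather than only harmonic ones; the factor $\lambda$ produced by differentiating $e^{\lambda F}$ then genuinely dominates, with no competing critical point to annihilate its coefficient.
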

\noindent Our starting point is the following result on the linearised Calder\'on problem. 
\begin{proposition} \label{linear Calderon problem}
Let $f \in \mathcal{C}^{1,\alpha}(M)$ be such that 
\begin{alignat*}{2}
\int_{M} u_1 u_2 f \hspace*{0.5mm} d\text{v}_{g}  = 0
\end{alignat*}
for all harmonic functions $u_1, u_2$ which vanish on $\partial M \backslash \Gamma$, then $f=0$.
\end{proposition}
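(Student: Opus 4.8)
The statement is the Riemann--surface, partial--data incarnation of Bukhgeim's treatment of the linearised Calder\'on problem, and it is essentially contained in the analysis of \cite{leocalderon}; here is the route I would take. Fix an arbitrary interior point $p_0 \in M \setminus \partial M$. By \cite{leocalderon} one may choose a holomorphic Morse function $\Phi = \varphi + i\psi$ on $M$ having a nondegenerate critical point at $p_0$ and with $\varphi = \mathrm{Re}\,\Phi$ having no critical point on $\partial M$. Since $\Phi$ is Morse, $\varphi$ and $\psi$ are then Morse functions on $M$ sharing the critical set $\mathrm{Crit}(\Phi) \subset M \setminus \partial M$, and at each such point $\mathrm{Hess}\,\psi$ is nondegenerate. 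The role of $\Phi$ is that its critical points will be the points at which the product of two complex geometric optics (CGO) solutions concentrates after a stationary phase argument.

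Next, using the Carleman estimates with boundary terms of \cite{leocalderon}, I would construct, for all sufficiently small $h>0$, \emph{exactly harmonic} functions
\begin{alignat*}{2}
u_1 = e^{\Phi/h}(a + r_1), \qquad u_2 = e^{-\overline{\Phi}/h}(\overline{b} + r_2),
\end{alignat*}
\emph{both vanishing on} $\partial M \setminus \Gamma$, where $a,b$ are holomorphic on $M$ with $a(p_0) \neq 0$, with $a$ vanishing at every other point of $\mathrm{Crit}(\Phi)$, and with $b(p_0) \neq 0$; such amplitudes exist by standard facts about holomorphic functions on a Riemann surface with boundary. After extracting an explicit $O(h)$ correction, the remainders $r_1,r_2$ are to be controlled by an $o(h)$ bound in $L^2(M)$, exactly as in \cite{leocalderon}. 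I expect the main obstacle to be precisely the requirement that \emph{both} $u_1$ and $u_2$ vanish on all of $\partial M \setminus \Gamma$: for the product $u_1 u_2$ to be free of exponential growth the two solutions must carry the opposite Carleman weights $\pm\varphi$, so one must arrange $\partial_\nu \varphi$ to have a suitable sign pattern on $\partial M$ relative to $\Gamma$, and it is here that the partial--data Carleman machinery of \cite{leocalderon} is essential. I would import that construction rather than redo it.

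Finally, I would substitute $u_1,u_2$ into the hypothesis. Since $\Phi - \overline{\Phi} = 2i\psi$, the exponential factors combine to the oscillatory weight $e^{2i\psi/h}$, and the remainder bounds give
\begin{alignat*}{2}
0 = \int_M u_1 u_2 f \, d\text{v}_g = \int_M e^{2i\psi/h}\, a\, \overline{b}\, f \, d\text{v}_g + o(h), \qquad h \to 0 .
\end{alignat*}
Because $\psi$ has only nondegenerate critical points, all interior and equal to $\mathrm{Crit}(\Phi)$, stationary phase yields $\int_M e^{2i\psi/h} a \overline{b} f \, d\text{v}_g = h \sum_{p \in \mathrm{Crit}(\Phi)} c_p\,(a\overline{b}f)(p) + o(h)$ with all $c_p \neq 0$, the boundary contribution being $o(h)$ precisely because $\psi$ has no critical point on $\partial M$. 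Dividing by $h$, letting $h \to 0$, and using that $a$ vanishes at every critical point other than $p_0$, we obtain $c_{p_0}\, a(p_0)\overline{b(p_0)}\, f(p_0) = 0$, hence $f(p_0) = 0$. Since $p_0$ was an arbitrary interior point, $f$ vanishes on $M \setminus \partial M$, and therefore $f = 0$ on $M$ by continuity.
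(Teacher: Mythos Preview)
The paper does not give its own proof of this proposition; it simply refers to \cite[Proposition~5.1]{leocalderon}. Your sketch is precisely the Bukhgeim--type stationary phase argument carried out in that reference---holomorphic Morse phase with a prescribed critical point, CGO harmonic functions with opposite weights vanishing on $\partial M\setminus\Gamma$ via the Carleman machinery, and stationary phase to read off $f(p_0)$---so your approach and the paper's (cited) approach coincide.
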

\noindent We refer to \cite[Proposition 5.1]{leocalderon} for a proof of this claim. Applying the above to (\ref{boundary integral identity 1}), we deduce immediately 
\begin{alignat}{2} \label{wedge product identification}
 X \wedge \star du = u Q
\end{alignat} 
for all harmonic function $u$ which vanish on $\partial M \backslash \Gamma$. We also remark that
\begin{lemma}[Riemann-Lebesgue] \label{Riemann-Lebesgue}
Let $\psi \in \mathcal{C}^{\infty}(M)$ be a real valued function for some $N$ large. For every $f \in L^1(M)$ we have as $\lambda \rightarrow \infty$ 
\begin{alignat*}{2}
\int_{M} e^{i\lambda \psi} f \hspace*{0.5mm} d\text{v}_{g} = o(1).
\end{alignat*}
\end{lemma}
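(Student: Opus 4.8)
The plan is to run the classical two–step proof behind the Riemann--Lebesgue lemma: density reduces the claim to smooth integrands, and the oscillation of $e^{i\lambda\psi}$ is then extracted by a non--stationary phase (integration by parts) argument. Since smooth functions are dense in $L^1(M)$, given $\varepsilon>0$ we may pick $g\in\mathcal{C}^{\infty}(M)$ with $\|f-g\|_{L^1(M)}<\varepsilon$, and then
\[
\Big|\int_{M}e^{i\lambda\psi}f\,d\text{v}_g\Big|\;\le\;\Big|\int_{M}e^{i\lambda\psi}g\,d\text{v}_g\Big|+\|f-g\|_{L^1(M)}\;\le\;\Big|\int_{M}e^{i\lambda\psi}g\,d\text{v}_g\Big|+\varepsilon ,
\]
so it suffices to show $\int_{M}e^{i\lambda\psi}g\,d\text{v}_g\to 0$ for $g\in\mathcal{C}^{\infty}(M)$.

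On the open set where $d\psi\neq 0$ one has the identity $e^{i\lambda\psi}=\tfrac{1}{i\lambda}\langle \nabla\psi/|\nabla\psi|^{2},\,\nabla e^{i\lambda\psi}\rangle$, and integrating by parts via the divergence theorem on $M$ produces one interior integral and one boundary integral over $\partial M$, each carrying a factor $\lambda^{-1}$ and bounded by a constant depending only on $\|g\|_{\mathcal{C}^{1}}$, $\|\psi\|_{\mathcal{C}^{2}}$ and a lower bound for $|\nabla\psi|$. Hence $\int_{M}e^{i\lambda\psi}g\,d\text{v}_g=O(\lambda^{-1})$ as soon as $\psi$ has no critical point on $\overline{M}$. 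In the situation where this lemma is applied $\psi$ is the imaginary part of a holomorphic Morse function, hence has only finitely many isolated nondegenerate critical points; to cover that case one first fixes a small open neighbourhood $U$ of the critical set, small enough that $\int_{U}|f|\,d\text{v}_g<\varepsilon$ — possible since $f\in L^1(M)$ and $|U|$ may be taken arbitrarily small — then mollifies $f$ on $M\setminus U$ and applies the integration by parts above on $M\setminus U$, the extra boundary contribution along $\partial U$ being again $O(\lambda^{-1})$.

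Putting the estimates together gives $\limsup_{\lambda\to\infty}\big|\int_{M}e^{i\lambda\psi}f\,d\text{v}_g\big|\le C\varepsilon$ for every $\varepsilon>0$, whence the integral is $o(1)$. The only genuine obstacle is the presence of the critical points of $\psi$, where the phase does not oscillate and integration by parts gains nothing; this is dealt with purely at the level of the $L^1$ norm, using that the measure $|f|\,d\text{v}_g$ is absolutely continuous with respect to $d\text{v}_g$ and so assigns arbitrarily small mass to a sufficiently small neighbourhood of the finite critical set. Everything else — the density reduction and the divergence--theorem computation, boundary terms included — is routine.
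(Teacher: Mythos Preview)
Your argument is correct and is precisely the standard density--plus--nonstationary--phase proof of the Riemann--Lebesgue lemma on a compact manifold. The paper does not supply its own proof of this lemma at all; it simply refers the reader to \cite[Lemma~5.3]{leocalderon}, where the same argument is carried out. Your observation that the critical set of $\psi$ must be handled separately via absolute continuity of $|f|\,d\text{v}_g$ is exactly the point, and you are right that in the application $\psi=2\,\mathrm{Im}\,F$ with $F$ holomorphic, so the critical set is discrete; note that without some such hypothesis the lemma as literally stated would fail (e.g.\ $\psi$ constant), so the phrase ``for some $N$ large'' in the statement is presumably a vestige of an omitted Morse-type assumption from the cited reference.
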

\noindent The proof is standard and can be found in \cite[Lemma 5.3]{leocalderon}. Now we prove
\begin{lemma} \label{identification result 1}
Let $F$ be a holomorphic function which is purely real on $\partial M \backslash \Gamma$. Let $A, B$ be respectively the holomorphic and anti-holomorphic parts of $X$, then we have
\begin{alignat*}{2} 
A \wedge \partial F = 0 \ \ \text{and} \ \ B \wedge \bar{\partial} \bar{F} = 0.
\end{alignat*}
\end{lemma}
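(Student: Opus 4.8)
The plan is to test the pointwise identity $(\ref{wedge product identification})$, namely $X\wedge\star du=uQ$ for all harmonic $u$ vanishing on $\partial M\setminus\Gamma$, against an explicit family of functions built from $F$. Since $F$ is holomorphic and real on $\partial M\setminus\Gamma$, every power $F^{k}$ is again holomorphic and real there, so
\[
u_{k}\ \mydef\ \mathrm{Im}\,(F^{k})\ =\ \tfrac{1}{2i}\bigl(F^{k}-\bar{F}^{k}\bigr),\qquad k\ge 1,
\]
is harmonic and vanishes on $\partial M\setminus\Gamma$, hence is an admissible input for $(\ref{wedge product identification})$. The key observation — the two-dimensional simplification alluded to in the introduction — is a bidegree count: write $X=A+B$ with $A$ the $(1,0)$-part and $B$ the $(0,1)$-part of $X$. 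Since $\star$ preserves bidegree on $1$-forms (indeed $\star\partial u=-i\,\partial u$ and $\star\bar\partial u=i\,\bar\partial u$), the ``diagonal'' wedges $A\wedge\partial u$ and $B\wedge\bar\partial u$ vanish identically, and therefore
\[
X\wedge\star du\ =\ i\,A\wedge\bar\partial u\ -\ i\,B\wedge\partial u .
\]

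Next I would substitute $u=u_{k}$. Because $\bar\partial F=\partial\bar F=0$, one has $\partial u_{k}=\tfrac{k}{2i}F^{k-1}\partial F$ and $\bar\partial u_{k}=-\tfrac{k}{2i}\bar{F}^{k-1}\bar\partial\bar F$, so $(\ref{wedge product identification})$ reduces, after dividing by $-k/2$ and abbreviating $P\mydef A\wedge\bar\partial\bar F$ and $R\mydef B\wedge\partial F$, to
\[
\bar{F}^{\,k-1}P\ +\ F^{\,k-1}R\ =\ -\tfrac{2}{k}\,\mathrm{Im}\,(F^{k})\,Q ,\qquad k\ge 1 .
\]
On the open dense set $U\mydef\{\mathrm{Im}\,F\ne 0\}$ (dense because $F$ is non-constant, the constant case being trivial) the cases $k=1,2$ form a $2\times 2$ linear system in $(P,R)$ with determinant $F-\bar F=2i\,\mathrm{Im}\,F\ne0$, so $P=R=-\mathrm{Im}(F)\,Q$ on $U$. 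Substituting this into the $k=3$ equation and using $\mathrm{Im}(F^{3})=\mathrm{Im}(F)\,(F^{2}+F\bar F+\bar F^{2})$ collapses everything to $\tfrac13\mathrm{Im}(F)(F-\bar F)^{2}\,Q=0$, i.e.\ $(\mathrm{Im}\,F)^{3}Q=0$, whence $Q=0$ on $U$ and so $Q\equiv0$ on $M$ by continuity. Consequently $P=R=0$ on $U$, and then $A\wedge\bar\partial\bar F=B\wedge\partial F=0$ on all of $M$, which are the asserted identities (up to the labelling of the holomorphic and anti-holomorphic pieces of $X$).

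I do not expect a genuine obstacle: the content is precisely the bidegree cancellation that removes the first-order ``diagonal'' terms, followed by elementary linear algebra in the three lowest test functions. The only items deserving a line of justification are that $\mathrm{Im}(F^{k})$ genuinely belongs to the class of harmonic functions for which $(\ref{wedge product identification})$ was established, and the passage from $U$ to all of $M$, which is just continuity of $A,B,Q$ together with the real-analyticity of $\mathrm{Im}\,F$. (Note that the argument incidentally produces $Q\equiv 0$, which is more than stated; alternatively one could replace the powers $k=1,2,3$ by the single generating identity obtained from $u=\mathrm{Im}(e^{tF})$.)
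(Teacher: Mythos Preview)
Your argument is correct, but it is a genuinely different route from the paper's. The paper plugs the single exponential family $u=e^{\lambda F}-e^{\lambda\bar F}$ into the pointwise identity $X\wedge\star du=uQ$, integrates against $v\,e^{-\lambda F}$ for an arbitrary $v\in L^{1}(M)$, and then uses the Riemann--Lebesgue lemma (Lemma~\ref{Riemann-Lebesgue}) to kill the oscillatory pieces as $\lambda\to\infty$; what survives is $\int_{M} v\,A\wedge\partial F=0$ for all $v$, hence $A\wedge\partial F=0$ (and the conjugate statement follows symmetrically). You instead stay entirely at the pointwise level, using only the three polynomial test functions $\mathrm{Im}(F^{k})$, $k=1,2,3$, and close by elementary linear algebra on the dense open set $\{\mathrm{Im}\,F\neq 0\}$ followed by continuity. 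Your approach is more elementary in that it avoids any asymptotic or oscillatory analysis, and it actually yields $Q\equiv 0$ along the way, which the paper only derives afterwards in the proof of Proposition~\ref{identification}. The paper's approach, on the other hand, isolates each of $A\wedge\partial F$ and $B\wedge\bar\partial\bar F$ separately without needing the third equation, and the integration-against-$L^{1}$ trick is more robust if one later wants to allow lower regularity on $X$ or $Q$. (Your parenthetical remark about $u=\mathrm{Im}(e^{tF})$ is, up to a harmless normalisation, exactly the paper's choice.) Note also that your labelling of $A,B$ as the $(1,0)$ and $(0,1)$ parts is the reverse of the paper's convention, where $A=\alpha\,d\bar z$ locally; as you observe, this is only a naming issue and the two conclusions $A\wedge\bar\partial\bar F=0$, $B\wedge\partial F=0$ coincide with the paper's after swapping the letters.
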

\begin{proof}
We only prove the case of $A$ since the other claim follows from conjugation. Let $v$ be any $L^{1}(M)$ function and $\lambda > 0$. Since $e^{\lambda F}$ is purely real on $\partial M \backslash \Gamma$ and holomorphic, the function $e^{\lambda F} - e^{\lambda \bar{F}}$ is harmonic and vanish on $\partial M \backslash \Gamma$. Using (\ref{wedge product identification}) we deduce that 
\begin{alignat}{2} 
\int_{M} v e^{-\lambda F}( e^{\lambda F} - e^{\lambda \bar{F}} ) & \hspace*{0.5mm} Q = \int_{M} v e^{-\lambda F} X \wedge \star d (e^{\lambda F} - e^{\lambda \bar{F}}) \label{divide this equation} \\ 
& = \int_{M} v e^{-\lambda F} A \wedge i \partial e^{\lambda F} + v e^{-\lambda F} B \wedge i \bar{\partial} e^{\lambda \bar{F}} \nonumber \\
& = \int_{M} \lambda v A \wedge i \partial F + \lambda  v e^{- \lambda (F - \bar{F})} B \wedge i \bar{\partial} \bar{F}. \nonumber
\end{alignat}
On the other hand
\begin{alignat*}{2}
\int_{M} v e^{-\lambda F}(e^{\lambda F} - e^{\lambda \bar{F}}) \hspace*{0.5mm} Q = \int_{M} v Q - v e^{-\lambda(F - \bar{F} )} \hspace*{0.5mm} Q,
\end{alignat*}
thus by Lemma \ref{Riemann-Lebesgue}, we can divide (\ref{divide this equation}) by $i\lambda$ everywhere to get that
\begin{alignat*}{2}
\int_{M} v A \wedge \partial F = o(1) + \frac{O(1)}{\lambda}
\end{alignat*}
as $\lambda \rightarrow \infty$. Taking this limit gives us
\begin{alignat*}{2}
\int_{M} v A \wedge \partial F = 0.
\end{alignat*}
Since $v$ is an arbitrary function in $L^{1}(M)$ we must have $A \wedge \partial F = 0$.
\end{proof}
\begin{proof}[Proof of Proposition \ref{identification}]The existence of a holomorphic function $F$ with the required boundary condition is ensured by \cite[Proposition 2.1]{leocalderon}. Since locally in a holomorphic coordinate we can write $A = \alpha d\bar{z}$ and $| A \wedge \partial F| = |\alpha| |\partial_{z}F|$ where $\partial_{z}F$ is harmonic, we can use the result of \cite{measurezero} which state that the zeros of harmonic functions has measure zero to conclude from Lemma \ref{identification result 1} that $A = 0$. Similarly we can deduce $B = 0$. Now we have $ uQ = 0 $ for any harmonic function $u$ which vanish on $\partial M \backslash \Gamma$, but the same reason we must have $Q = 0$.
\end{proof}
We will also need the following result on boundary determination which follows from a rather standard calculation. 
\begin{proposition} \label{integral identity for boundary determination 2}
Let $X \in \mathcal{C}^{2,\alpha}(T^{\star}M), Q \in L^{\infty}(T^{\star}M)$ be such that 
\begin{alignat}{2} \label{integral identity to be used for boundary determination}
\int_{M} u_1 X \wedge \star du_2 = \int_{M} u_1 u_2 Q
\end{alignat}
for all harmonic functions $u_1$ and $u_2$ which vanish on $\partial M \backslash \Gamma$. Let $\tau$ be the unit tangential vector field along $\partial M$, then $X(\nu) = X(\tau) = 0$ along $\Gamma$.
\end{proposition}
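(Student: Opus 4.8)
[Proof proposal for Proposition \ref{integral identity for boundary determination 2}]
The plan is to probe (\ref{integral identity to be used for boundary determination}) with harmonic functions concentrating near an arbitrary point $p \in \Gamma$ and to read off the coordinate components of $X$ at $p$ from the leading asymptotics. Fix $p\in\Gamma$ and pick isothermal coordinates $(x_1,x_2)$ centred at $p$ and adapted to the boundary, so that locally $g = e^{2\phi}(dx_1^2+dx_2^2)$ with $\{x_2=0\}\subset\partial M$ and $\{x_2>0\}\subset M$; write $X = X_1\,dx_1 + X_2\,dx_2$. On a surface $\Delta_g = e^{-2\phi}\Delta_0$, so the $g$-harmonic functions are precisely the Euclidean harmonic ones, and $X\wedge\star\,du = \langle X,du\rangle_g\,dv_g = (X_1\partial_1 u + X_2\partial_2 u)\,dx$; thus in these coordinates (\ref{integral identity to be used for boundary determination}) reads $\int u_1(X_1\partial_1 u_2 + X_2\partial_2 u_2)\,dx = \int u_1 u_2\widetilde Q\,dx$ with $\widetilde Q\in L^\infty$. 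Since the outward unit normal and a unit tangent at the boundary are $\nu = -e^{-\phi}\partial_{x_2}$ and $\tau = \pm e^{-\phi}\partial_{x_1}$, the claim is equivalent to $X_1 = X_2 = 0$ on $\{x_2=0\}$ near $p$.

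Put $z = x_1 + ix_2$, let $h>0$ be small, and let $\chi(x_1),\psi(x_2)$ be cutoffs supported in a small coordinate ball, each $\equiv 1$ near the origin, with $\psi$ real. Set $u^0_h = \chi(x_1)\psi(x_2)e^{iz/h}$; this is holomorphic, hence $g$-harmonic, up to an error $\Delta_g u^0_h$ whose $\psi'$-contribution is $O(e^{-\delta/h})$ since $|e^{iz/h}| = e^{-x_2/h}$, and whose $\chi'$-contribution oscillates in $x_1$ at frequency $\sim h^{-1}$; consequently $\|\Delta_g u^0_h\|_{L^2}\lesssim h^{-1/2}$, but $\|\Delta_g u^0_h\|_{H^{-1}}\lesssim h^{1/2}$ and $\|\Delta_g u^0_h\|_{H^{-2}}\lesssim h^{3/2}$. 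Solving $\Delta_g w_h = -\Delta_g u^0_h$ with $w_h|_{\partial M}=0$ yields an exactly harmonic $u_h = u^0_h + w_h$ that vanishes on $\partial M\setminus\Gamma$ (the support of $u^0_h$ meets $\partial M$ inside $\Gamma$, and $w_h\equiv 0$ on $\partial M$), with $\|w_h\|_{H^1}\lesssim h^{1/2}$ and $\|w_h\|_{L^2}\lesssim h^{3/2}$. I would then substitute $u_1 = \overline{u_h}$, $u_2 = u_h$ into (\ref{integral identity to be used for boundary determination}), noting that $\overline{u^0_h}$ is anti-holomorphic and hence admissible after the same correction. Then $u_1 u_2 = |\chi|^2\psi^2 e^{-2x_2/h} + (\text{corrections})$, while $X_1\partial_1 u_2 + X_2\partial_2 u_2 = h^{-1}(iX_1 - X_2)u_2 + (\text{lower order})$; since $h^{-1}\int_0^\infty e^{-2x_2/h}G(x_1,x_2)\,dx_2\to\tfrac12 G(x_1,0)$ for continuous $G$, the principal part of the left-hand side tends to $\tfrac12\int(iX_1-X_2)(x_1,0)\,|\chi(x_1)|^2\,dx_1$. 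The right-hand side is $O(\|\widetilde Q\|_\infty h)\to 0$, and all terms involving $w_h$ are $O(h)\to 0$ by the bounds above, integrating by parts to move a derivative off $u^0_h$ where needed and using the regularity of $X$ (so that $\delta_g X\in L^\infty$). Letting $\chi$ range over arbitrary bumps gives $iX_1 - X_2 = 0$ on $\{x_2=0\}$ near $p$; the choice $u_1 = u_h$, $u_2 = \overline{u_h}$ similarly gives $iX_1 + X_2 = 0$, and the two identities together force $X_1 = X_2 = 0$ at $p$. As $p\in\Gamma$ was arbitrary, $X(\nu) = X(\tau) = 0$ on $\Gamma$.

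Constructing boundary-adapted isothermal coordinates and solving for $w_h$ are routine. The one delicate point is the error bookkeeping for $w_h$: the crude estimate $\|\Delta_g u^0_h\|_{L^2}\sim h^{-1/2}$ is not small, and it is only by exploiting the $h^{-1}$-scale oscillation of the error -- which makes it small in negative Sobolev norms -- that one obtains $\|w_h\|_{H^1}\to 0$ quickly enough for every correction term to be negligible against the $O(1)$ principal term on the left. I expect this to be the main, if still standard, obstacle; the rest is a direct computation with concentrating plane-wave harmonic functions.
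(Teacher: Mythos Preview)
Your proposal is correct and follows the same boundary-determination strategy as the paper: test the identity with plane-wave harmonic functions $e^{iz/h}$ concentrating at a point of $\Gamma$, correct by a small Dirichlet solution, and read off the components of $X$ from the leading asymptotics as $h\to 0$. The only technical difference is the cutoff scale: the paper localises with $\eta(z/\sqrt h)$, so the principal term is of size $h^{1/2}$ and yields the pointwise value of $X$ at $p_0$ after dividing through, while you keep the cutoff at a fixed scale, obtain an $O(1)$ principal term equal to $\tfrac12\int \chi^2(x_1)\,(iX_1\pm X_2)(x_1,0)\,dx_1$, and then vary $\chi$. The paper also constructs two distinct correctors ($r_1$ controlled in $L^2$, $r_2$ in $H^1$) rather than a single $w_h$; your integration-by-parts remark, using $d^\star X\in L^\infty$, is precisely what makes a single $H^1$ corrector suffice, so the extra $H^{-2}/L^2$ refinement you sketch is in fact not needed.
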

\noindent Let $\alpha = ( i, - 1 )$ be chosen such that $\alpha.\alpha = 0$. Near an arbitrary $p_0 \in \Gamma$ we find conformal boundary normal coordinate $z = x + iy$ centered at $p_0$ where $|z| \leq 1$, $y > 0$ and the boundary is given by $\{ x = 0 \}$. In such a chart the metric is given by $g = e^{2\rho} |dz|^2$ for some smooth function $\rho$ and $(\partial_{x}^2 + \partial_{y}^2) e^{\alpha \cdot z /h} = 0$. Hence the conformal covariance of the Laplacian ensures that $\Delta_{g} e^{\alpha \cdot z /h} = 0$. Let $\eta$ be a smooth cut-off function that is identically $1$ for $|z| < 1/2$ and $0$ for $|z| > 3/4$. Set $\eta_{h}(z) = \eta(z/\sqrt{h})$ and $v_{h}(z) = \eta_{h} e^{\alpha \cdot z/h}$. 
\begin{lemma} \label{estimates for harmonic function}
Let $v_{h}$ be defined as above, then there exists $(r_1,r_2) \in L^2(M) \times H^1(M)$ such that $\| r_1 \|_{L^2(M)} \leq C h^{5/4}$ and $\| r_2 \|_{H^1(M)} \leq C h^{1/4}$. Moreover, the functions $u_1 = v_{h} +r_1$ and $u_2 = v_{h} +r_2$ are harmonic. In particular, $u_1$ and $u_2$ vanish on $\partial M \backslash \Gamma$.
\end{lemma}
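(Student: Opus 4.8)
The plan is to produce $u_1$ and $u_2$ as genuinely $g$-harmonic perturbations of the explicit quasi-mode $v_h$, obtained from a single Dirichlet problem, and then to extract the norm bounds from the fine structure of $\Delta_g v_h$. Because $\alpha\cdot\alpha=0$ the function $\alpha\cdot z$ is holomorphic (indeed $\alpha\cdot z=iz$), hence $e^{\alpha\cdot z/h}$ is holomorphic; in particular $\Delta_g e^{\alpha\cdot z/h}=0$, so
\[
\Delta_g v_h \;=\; (\Delta_g\eta_h)\,e^{\alpha\cdot z/h} \;+\; 2\,\langle d\eta_h,\,d e^{\alpha\cdot z/h}\rangle_g ,
\]
which is supported in the annulus $A_h:=\{\tfrac12\sqrt h\le|z|\le\tfrac34\sqrt h\}$, where $d\eta_h$ is not identically zero (each term carries a derivative of $\eta_h$). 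I would let $r_1=r_2=:r$ be the unique solution of $\Delta_g r=-\Delta_g v_h$ in $M$ with $r|_{\partial M}=0$, i.e. $u:=v_h+r$ is the $g$-harmonic extension of $v_h|_{\partial M}$; then $u_1=u_2=u$ is harmonic. Since $\operatorname{supp}v_h$ lies inside an $O(\sqrt h)$-ball about $p_0\in\Gamma$, for $h$ small $v_h$ — and hence $r$, and hence $u$ — vanishes on $\partial M\setminus\Gamma$, which is the last assertion. (Carrying $r_1,r_2$ as separate symbols only anticipates that $u_1$ will be used undifferentiated and $u_2$ differentiated, so they will be measured in $L^2$ and $H^1$ respectively; one may take $r_1=r_2$.)

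On $A_h$ one has $|e^{\alpha\cdot z/h}|=e^{-y/h}\le 1$, $|d\eta_h|=O(h^{-1/2})$, $|\Delta_g\eta_h|=O(h^{-1})$, together with the Gaussian moments $\int_{A_h}e^{-2y/h}\,d\text{v}_g=O(h^{3/2})$ and $\int_{M\cap\{|z|\le\sqrt h\}}(y/\sqrt h)^2 e^{-2y/h}\,d\text{v}_g=O(h^{5/2})$. The point to watch is that the naive elliptic estimate $\|r\|_{H^2}\lesssim\|\Delta_g v_h\|_{L^2}\lesssim h^{-3/4}$ is far too weak; one must exploit holomorphy of $e^{\alpha\cdot z/h}$. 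Writing $\Delta_g$ as a nonzero multiple of $e^{-2\rho}\partial\bar\partial$ in the given conformal chart, $\bar\partial e^{\alpha\cdot z/h}=0$ gives $\bar\partial v_h=(\bar\partial\eta_h)\,e^{\alpha\cdot z/h}$ — crucially with no factor of $h^{-1}$ — so a single integration by parts yields, for $\phi\in H^1_0(M)$,
\[
\bigl|\langle\Delta_g v_h,\phi\rangle\bigr| \;\lesssim\; \int_{A_h}|\partial\phi|\,|\bar\partial\eta_h|\,e^{-y/h}\,d\text{v}_g \;\lesssim\; \|(\bar\partial\eta_h)e^{\alpha\cdot z/h}\|_{L^2}\,\|\phi\|_{H^1} \;\lesssim\; h^{1/4}\,\|\phi\|_{H^1},
\]
the boundary terms vanishing because $\eta_h$ is compactly supported in the chart and $\phi|_{\partial M}=0$, and the last step using $\|(\bar\partial\eta_h)e^{\alpha\cdot z/h}\|_{L^2}^2\lesssim h^{-1}\!\cdot h^{3/2}$. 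Hence $\|\Delta_g v_h\|_{H^{-1}(M)}\lesssim h^{1/4}$ and the energy estimate for the Dirichlet Laplacian gives $\|r_2\|_{H^1(M)}\lesssim h^{1/4}$.

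For the sharper bound $\|r_1\|_{L^2}\lesssim h^{5/4}$ one tests the same kind of identity against $\phi\in H^2(M)\cap H^1_0(M)$ with $\|\phi\|_{H^2}\le1$, using the vanishing of $\phi$ on $\partial M$ to gain a further half-power of $h$ from the part of the support abutting $\partial M$; equivalently — the route I would actually write out — one compares $v_h$ with the harmonic extension $u$ of $v_h|_{\partial M}$ directly, whose leading WKB profiles differ by $\bigl(\eta(x/\sqrt h,0)-\eta_h(z)\bigr)e^{\alpha\cdot z/h}=O(y/\sqrt h)\,e^{-y/h}$, so that the second moment above gives $\|r_1\|_{L^2(M)}\lesssim h^{5/4}$ (and, after differentiation and using the first moment, again $\|r_1\|_{H^1}\lesssim h^{1/4}$). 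The only analytic input — solvability of $\Delta_g r=f$ on $(M,g)$ with $r|_{\partial M}=0$, together with the attendant weighted estimates — is precisely what is furnished by the complex-analytic machinery of \cite{leocalderon}. The genuinely delicate point is squeezing out the \emph{sharp} exponents $h^{5/4}$ and $h^{1/4}$: these are invisible to $L^2$-elliptic regularity and rest on the two cancellations forced by $\alpha\cdot\alpha=0$, namely $\Delta_g e^{\alpha\cdot z/h}=0$ (which confines $\Delta_g v_h$ to $A_h$ at size $O(h^{-3/2})$) and $\bar\partial e^{\alpha\cdot z/h}=0$ (which removes the $h^{-1}$ in the integration by parts) — equivalently, on $\operatorname{Re}(\alpha\cdot z)$ being a limiting Carleman weight; everything else is bookkeeping of Gaussian integrals over the annulus $A_h$.
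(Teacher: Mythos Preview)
Your argument for $r_2$ is essentially the paper's: both solve the homogeneous Dirichlet problem $\Delta_g r=-\Delta_g v_h$, $r|_{\partial M}=0$, and bound $\|\Delta_g v_h\|_{H^{-1}}\lesssim h^{1/4}$ by an integration-by-parts device resting on $\alpha\cdot\alpha=0$. The paper writes $e^{\alpha\cdot z/h}=-\tfrac{h}{2}(i\partial_x+\partial_y)e^{\alpha\cdot z/h}$ and iterates, trading powers of $h^{-1}$ for derivatives landing on the test function $w\in H^1_0$; you instead use $\bar\partial e^{\alpha\cdot z/h}=0$ so that $\Delta_g v_h\sim\partial\bigl[(\bar\partial\eta_h)e^{\alpha\cdot z/h}\bigr]$ is already a total $\partial$-derivative of something of size $O(h^{-1/2})$, and a single integration by parts suffices. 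Same mechanism, your packaging is a bit cleaner. Taking $r_1=r_2$ is also consistent with the paper: the Dirichlet solution is unique, so the two symbols name the same function measured in two norms.

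For $r_1$ the paper gives no argument at all --- it simply cites \cite[Lemma~7.2]{leocalderon} --- so your sketch goes beyond what the paper proves. The duality idea (test against $\phi\in H^2\cap H^1_0$ and exploit $\phi|_{\partial M}=0$ to gain an extra half-power of $h$) is the correct strategy and is indeed what the cited lemma carries out. However, the alternative ``WKB-profile'' route you say you would actually write out has a genuine gap: the harmonic extension $u$ of $v_h|_{\partial M}$ is \emph{not} $\eta(x/\sqrt h,0)\,e^{\alpha\cdot z/h}$ --- that function is neither harmonic nor even globally defined on $M$ --- so asserting that $u-v_h$ has leading profile $\bigl(\eta(x/\sqrt h,0)-\eta_h(z)\bigr)e^{\alpha\cdot z/h}$ is precisely the $O(h^{5/4})$ statement you are trying to prove. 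If you pursue the duality route instead, be aware that after one integration by parts the remaining integrand carries $\partial\phi$, which does not vanish on $\partial M$; a second integration by parts then produces a boundary term involving $\partial_\nu\phi$ on $\Gamma$, and recovering the missing half-power requires either a tangential integration by parts or a Hardy-type inequality near $\partial M$.
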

\begin{proof}
The construction of $r_1$ is covered in \cite[Lemma 7.2]{leocalderon}. We extend their argument to construct $r_2$. First we have the computation
\begin{alignat}{2}  \label{laplacian calculation}
\Delta_{g} v_{h}(z) = \frac{1}{h} e^{\alpha \cdot z/h}( \Delta_{g}\eta )(z/ \sqrt{h}) + \frac{2}{h^{3/2}} e^{\alpha \cdot z /h} \langle d\eta( z / \sqrt{h} ), \alpha.dz  \rangle_g.
\end{alignat}
Let $w \in H^{1}_0(M)$. For $\chi = -i \partial_{x} \eta +\partial_{y}\eta$ and $\chi_{h}(z) = \chi(z/\sqrt{h})$ we have 
\begin{alignat}{2}
\frac{2}{h^{3/2}} \int_{|z|< \sqrt{h}} e^{\alpha \cdot z/h} &\langle d \eta(z/\sqrt{h}), \alpha.dz \rangle_g \hspace*{0.5mm} w d\text{v}_g = \frac{1}{h^{1/2}} \int_{|z|< \sqrt{h}}  \chi_h w (i \partial_{x} + \partial_{y}) e^{\alpha \cdot z /h} \hspace*{0.5mm} dxdy \nonumber \\
& = - \frac{1}{h^{1/2}} \int_{|z|< \sqrt{h} } e^{\alpha \cdot z/h} w (i\partial_{x} + \partial_y)(\chi_h) e^{2\rho} \hspace*{0.5mm}dxdy  \label{first integrla} \\
& \hspace*{1.5cm} - \frac{1}{h^{1/2}} \int_{|z|< \sqrt{h} } e^{\alpha \cdot z/h} \chi_h (i \partial_x +\partial_y) (w e^{2\rho}) \hspace*{0.5mm} dxdy, \label{second integral}
\end{alignat}
there is no boundary term in the integration by parts since $w_{|_{\partial M}} = 0$. Integral (\ref{second integral}) can be easily estimate by $ C h^{1/4} \| w \|_{H^1}  $. However, the differential operator in (\ref{first integrla}) will cause the appearance of an extra order $h^{-1/2}$ in front of the integral, so we have only improved the growth from $h^{-3/2}$ to $h^{-1}$. But as long as $w$ is not differentiated, we can apply the same integration by parts argument to obtain improvement as above. By doing so we can get that (\ref{first integrla}) is bounded by $Ch^{1/4} \|w\|_{H^1}$ as well. The other term in (\ref{laplacian calculation}) can be estimated in a similar fashion. This shows that $\| \Delta_{g} v_{h} \|_{H^{-1}} \leq C h^{1/4}$. Now by the standard Lax-Milgram argument we can find $r_2 \in H^{1}_0$ such that $\Delta_{g}r_2 = - \Delta_{g}v_h$ and $\| r_2 \|_{H^{1}} \leq C \| \Delta_{g}v_{h} \|_{H^{-1}}$ and so our claim is proved.
\end{proof}
\begin{proof}[Proof of Proposition \ref{integral identity for boundary determination 2}]
By an elementary calculation we have $\| v_h \|_{L^{2}} \leq Ch^{3/4}$ and $\| dv_{h} \|_{L^{2}} \leq C h^{-1/4}$. Let $u_1$ and $u_2$ be constructed as in Lemma \ref{estimates for harmonic function}, then we easily get
\begin{alignat}{2}
\int_{M} u_1 X \wedge \star d\bar{u}_2 & = \int_{M} v_h X \wedge \star d\bar{v}_h + o(h^{1/2})  = \frac{1}{h} \int_{|z|< \sqrt{h} } e^{-2y/h} \eta_{h}^2 X \wedge \star \bar{\alpha}.dz + o(h^{1/2})  \label{sum of integrals}
\end{alignat}
On the other hand, it is clear that
\begin{alignat*}{2}
\int_{M} u_1 X \wedge \star d\bar{u}_2 =  \int_{M} u_1 \bar{u}_2 Q = o(h^{1/2}).
\end{alignat*}
Moreover, we can do the same by replacing $u_1, u_2$ with their conjugates, so we have
\begin{alignat}{2} \label{add this equation}
\frac{1}{h} \int_{|z|< \sqrt{h} } e^{-2y/h} \eta_h^2 X \wedge \star \bar{\alpha}. dz = o(h^{1/2}) \ \ \text{and} \ \ \frac{1}{h} \int_{|z|< \sqrt{h} } e^{-2y/h} \eta_h^2 X \wedge \star \alpha.dz = o(h^{1/2}). 
\end{alignat}
Let $f = e^{2\rho}\langle X, dy \rangle_g $. Adding the two equations in (\ref{add this equation}) together and a standard integration by parts calculation gives us
\begin{alignat*}{2} 
0 = \frac{2}{h} \int_{|x|< \sqrt{h} } & \int_{0}^{\sqrt{h}} e^{-2y/h} \eta_h^2 f \hspace*{0.5mm} dydx = 2 \int_{|x|<1} \int_{0}^{1} e^{-2y/\sqrt{h}} \eta^2 f(h^{1/2}z) \hspace*{0.5mm} dydx \nonumber \\
&= - h^{1/2} \int_{|x|<1} \eta(x) f(h^{1/2}x) \hspace*{0.5mm} dx + o(h^{1/2}) = C h^{1/2} f(p_0) +o(h^{1/2})
\end{alignat*}
for some $C >0$. Dividing over by $h^{1/2}$ and taking the limit as $h \rightarrow 0$ yields $f(p_0) = 0$. Since $p_0 \in \Gamma$ is chosen arbitrarily we must have $X(\nu)$ along $\Gamma$. Taking the difference of (\ref{sum of integrals}) and applying the same argument also shows $X(\tau) = 0$ along $\Gamma$.
\end{proof}
\section{Proof of the theorem}
\noindent In this section we give the proof of Theorem \ref{main theorem}. 
\begin{proof}
We will proceed via induction. For all $k \geq 1$ we set
\begin{alignat*}{2}
X_{k} \ \mydef \ X_{k,1} - X_{k,2} \ \ \text{and} \ \ V_{k} \ \mydef \ V_{k,1} - V_{k,2}.
\end{alignat*}
\subsection{The Set Up}
Let $f_1$ and $f_2$ be elements of $ \mathcal{C}^{2,\alpha}(\partial M) \cap \mathcal{E}'(\Gamma) $, and for $ \epsilon = ( \epsilon_1, \epsilon_2) \in \mathbb{C}^{2} $ we may set $f_{\epsilon} = \epsilon_1 f_1 + \epsilon_2 f_2$. It follows that if $|\epsilon|$ sufficiently small, then $f_{\epsilon}$ belongs to the domain of $\Lambda^{\Gamma}_{X_j, V_j}$, $j=1,2$. Let $ u_{j,f_{\epsilon}} $ be the unique small solution to $L_{X_{j},V_j} u_{j,f_{\epsilon}} = 0$ with boundary condition $f_{\epsilon}$. Differentiating $u_{j,f_{\epsilon}}$ with respect to $\epsilon_{\ell}$ and evaluating at $\epsilon = 0$ yields $\Delta_{g} \partial_{\epsilon_{\ell}} {u_{j,f_{\epsilon}}}_{|_{\epsilon = 0}} = 0$ and $ \partial_{\epsilon_{\ell}} {u_{j,f_{\epsilon}}}_{|_{\epsilon = 0}} = f_{\ell} $ on $\partial M$. By uniqueness of harmonic functions we may denote $u_{\ell} = \partial_{\epsilon_{\ell}} {u_{1,f_{\epsilon}}}_{|_{\epsilon = 0}} = \partial_{\epsilon_{\ell}} {u_{2,f_{\epsilon}}}_{|_{\epsilon=0}}$. Since ${u_{j,f_{\epsilon}}}_{|_{\epsilon = 0}} = 0$ on $\partial M$, any term involving positive powers of $u_{j,f_{\epsilon}}$ in ${\partial_{\epsilon_1} \partial_{\epsilon_2} ( L_{X_j,V_j} - \Delta_{g})u_{j,f_{\epsilon}}}_{|_{\epsilon = 0}}$ must vanish as well. Thus by a direct calculation we now get 
\begin{alignat}{2} \label{harmonic equation 1}
\begin{cases}
\Delta_{g} \partial_{\epsilon_1} \partial_{\epsilon_2} {u_{j,f_{\epsilon}}}_{|_{\epsilon = 0}} = 3i \star X_{j,1} \wedge \star d(u_1 u_2) - u_1 u_2 (2i d^{\star} X_{j,1} + V_{j,2})  \ \text{in} \ M \\
\hspace*{0.49cm} \partial_{\epsilon_1} \partial_{\epsilon_2} {u_{j,f_{\epsilon}}}_{|_{\epsilon = 0}} = 0 \ \text{on} \ \partial M.
\end{cases}
\end{alignat}
Similarly, one has 
\begin{alignat}{2} \label{linearised equation for forms}
\partial_{\epsilon_1} \partial_{\epsilon_2} i u_{j,f_{\epsilon}} {\langle X_j(u_{j,f_{\epsilon}}, p), \nu \rangle_{g}}_{|_{\epsilon=0}} = 2 i u_1 u_2 X_{j,1}(\nu) \ \ \text{on} \ \ \partial M.
\end{alignat}
Hence if $u_3$ is any harmonic function which vanishes on $\partial M \backslash \Gamma$, then linearising the Dirichlet-Neumann maps yields the integral identity
\begin{alignat}{2}
0 & = \int_{\partial M} u_3 \partial_{\epsilon_1} \partial_{\epsilon_2} ( \Lambda_{X_1, V_1}^{\Gamma} - \Lambda_{X_2, V_2}^{\Gamma} ){f_{\epsilon}}_{|_{\epsilon = 0}} \hspace*{0.5mm} d\text{s}_{g} \nonumber \\
& = \int_{\partial M} u_3 \partial_{\nu} \partial_{\epsilon_1} \partial_{\epsilon_2} (u_{1,f_{\epsilon}} - u_{2,f_{\epsilon}})_{|_{\epsilon = 0}} d\text{s}_{g} + \int_{\partial M} 2i u_1 u_2 u_3 X_1(\nu) \hspace*{0.5mm} d\text{s}_{g}. \label{zeroth identity}
\end{alignat}
Integration by parts combined with (\ref{harmonic equation 1}) shows
\begin{alignat}{2}
& \int_{\partial M}  u_3  \partial_{\nu} \partial_{\epsilon_1} \partial_{\epsilon_2} (u_{1,f_{\epsilon}} - u_{2,f_{\epsilon}})_{|_{\epsilon=0}} \hspace*{0.5mm} d\text{s}_{g} \nonumber \\
& \hspace*{0.5mm} = \int_{M} \langle du_3, d \partial_{\epsilon_1} \partial_{\epsilon_2} (u_{1,f_{\epsilon}} - u_{2,f_{\epsilon}})_{|_{\epsilon = 0}} \rangle_{g} \hspace*{0.5mm} d\text{v}_{g} - \int_{M} u_3 \Delta_{g} \partial_{\epsilon_1} \partial_{\epsilon_2} (u_{1,f_{\epsilon}} - u_{2,f_{\epsilon}})_{|_{\epsilon = 0}} \hspace*{0.5mm} d\text{v}_{g} \nonumber \\
& \hspace*{0.5mm} = \int_{M} - \hspace*{0.5mm} 3i u_3 X_1 \wedge \star d(u_1 u_2) + u_1 u_2 u_3 \star (2i d^{\star} X_1 +V_2), \label{first identity}
\end{alignat}
where we have used that $ \partial_{\epsilon_1} \partial_{\epsilon_2} (u_{1, f_{\epsilon}} - u_{2,f_{\epsilon}})_{|_{\epsilon = 0}} $ have zero Dirichlet boundary condition and $u_3$ is harmonic.
\subsection{Boundary Determination} Now we claim that
\begin{alignat}{2} \label{integral identity to be used for boundary determination}
\int_{M} u_1 u_3 X_1 \wedge \star du_2 = \int_{M} u_1 u_2 X_1 \wedge \star du_3 
\end{alignat}
for all harmonic functions $u_1, u_2$ and $u_3$ which vanish on $\partial M \backslash \Gamma$. Indeed, combining (\ref{zeroth identity}) and (\ref{first identity}) we see that 
\begin{alignat}{2} \label{difference integral 1}
\int_{M} 3i u_3 X_1 \wedge \star d(u_1 u_2) = \int_{M} u_1 u_2 u_3 \star (2i d^{\star}X_1 + V_2) + \int_{\partial M} 2i u_1 u_2 u_3 X_{1}(\nu) \hspace*{0.5mm} d\text{s}_{g}.
\end{alignat}
By permuting the roles of $u_2$ and $u_3$, we also get 
\begin{alignat}{2} \label{difference integral 2}
\int_{M} 3i u_2 X_1 \wedge \star d( u_1 u_3 ) = \int_{M} u_1 u_2 u_3 \star (2i d^{\star} X_1 + V_1) + \int_{\partial M} 2i u_1 u_2 u_3 X_{1}(\nu) \hspace*{0.5mm} d\text{s}_{g}.
\end{alignat}
Taking the difference of (\ref{difference integral 1}) and (\ref{difference integral 2}), we have
\begin{alignat*}{2}
0 = \int_{M} 3i u_3 X_1 \wedge & \star d(u_1 u_2) - 3i u_2 X_1 \wedge \star d(u_1 u_3) \\
& = \int_{M} 3i u_1 u_3 X_1 \wedge \star du_2 - 3i u_1 u_2 X_1 \wedge \star du_3
\end{alignat*}
as claimed. Hence we can apply Proposition \ref{integral identity for boundary determination 2} to conclude that $uX_1(\tau) = uX_{1}(\nu) = 0$ along $\Gamma$ for any harmonic function $u$ vanishing on $\partial M \backslash \Gamma$. By choosing $u$ to be non-vanishing on the requires sets thus implies $X_1(\tau) = X_1(\nu) = 0$ along $\Gamma$. 
\subsection{Interior Determination}
By boundary determination, the difference of the linearised Dirichlet-Neumann maps reduces to 
\begin{alignat}{2} \label{reduced DN map}
\partial_{\epsilon_1} \partial_{\epsilon_2} (\Lambda_{X_1,V_1}^{\Gamma} - \Lambda_{X_2,V_2}^{\Gamma}){f_{\epsilon}}_{|_{\epsilon = 0}} = \partial_{\nu} \partial_{\epsilon_1} \partial_{\epsilon_2} ( u_{1,f_{\epsilon}} - u_{2,f_{\epsilon}} )_{|_{\Gamma}},
\end{alignat}
and so the integral identity coming from (\ref{zeroth identity}) becomes
\begin{alignat*}{2}
\int_{M} 3i u_3 X \wedge \star d(u_1 u_2 ) = \int_{M} u_1 u_2 u_3 \star (2i d^{\star} X_1 + V_1).
\end{alignat*}
Since $X(\nu) = 0$ along $\Gamma$, integrating by parts shows that
\begin{alignat*}{2}
\int_{M} 3i u_1 u_2 X_1 \wedge \star du_3 = \int_{M} u_1 u_2 u_3 \star ( i d^{\star} X_1 - V_2 ).
\end{alignat*}
Proposition \ref{identification} now implies $X_1 = 0$and $ id^{\star}X_1 = V_2 $. In particular $V_2 = 0$.
\subsection{The Induction Step} Now we assume that $X_{k'-1} = 0$ and $V_{k'} = 0$ for all $k' < k$ and $k>2$. We will show that $X_{k-1} = 0$ and $V_{k} = 0$ via the same strategy as above. Let $\epsilon = (\epsilon_1, ..., \epsilon_{k}) \in \mathbb{C}^{k}$ and $f_{\epsilon} = \epsilon_1 f_1 + \cdots + \epsilon_{k} f_{k}$ for $|\epsilon|$ sufficiently small and $f_1, ... , f_{k}$ in $\mathcal{C}^{2,\alpha}(\partial M) \cap \mathcal{E}'(\Gamma)$. Let $u_{j,f_{\epsilon}}$ be the unique small solution to $\Lambda_{X_j,V_j} u_{j,f_{\epsilon}} = 0$ with boundary condition $f_{\epsilon}$. A direct calculation as before shows that $\partial_{\epsilon_1} \cdots \partial_{\epsilon_{k}} {u_{j,f_{\epsilon}}}_{|_{\epsilon = 0}} = 0$ on $\partial M$ and
\begin{alignat*}{2}
\Delta_{g}  \partial_{\epsilon_1} \cdots \partial_{\epsilon_{k}} ( {u_{1,f_{\epsilon}} - u_{2,f_{\epsilon}})}_{|_{\epsilon = 0}} = (k+1) i & X_{k-1} \wedge \star d(u_1 \cdots u_{k})  - u_1 \cdots u_{k} (kid^{\star}X_{k-1} + V_k  )
\end{alignat*}
where $ u_1, ... , u_{k} $ are harmonic functions with Dirichlet conditions $u_{\ell} = f_{\ell}$. Likewise, we have
\begin{alignat*}{2}
\partial_{\epsilon_1} \cdots \partial_{\epsilon_k} {(  {u_{1,f_{\epsilon}} \langle X_{1}( u_{1,f_{\epsilon}}, p), \nu \rangle_{g} } - u_{2,f_{\epsilon}} \langle X_{2} (u_{2,f_{\epsilon}, p}), \nu \rangle_g )}_{|_{\epsilon=0}} = ki u_1 \cdots u_{k} X_{k-1}(\nu).
\end{alignat*}
Let $u_{k+1}$ be any harmonic function which vanish on $\partial M \backslash \Gamma$. Integration by parts now gives 
\begin{alignat*}{2}
0 & = \int_{\partial M} u_{k+1} ( \Lambda_{X_1, V_1}^{\Gamma} - \Lambda_{X_2,V_2}^{\Gamma} ){f_{\epsilon}}_{|_{\epsilon = 0}} \hspace*{0.5mm} d\text{s}_{g} = - \int_{M} (k+1)i u_{k+1} X_{k-1} \wedge \star d(u_1 \cdots u_{k})  \\
& \hspace*{2.3cm} + \int_{M} u_1 \cdots u_{k} u_{k+1} \star (kid^{\star}X_{k-1} + V_k)  +\int_{\partial M} ki u_1 \cdots u_{k} u_{k+1} X_{k-1}(\nu) \hspace*{0.5mm} d\text{s}_g.
\end{alignat*}
The same permutation argument leads to 
\begin{alignat*}{2}
\int_{M} u_1 \cdots u_{k-1} u_{k+1} X_{k-1} \wedge \star d{u_k} = \int_{M} u_1 \cdots u_{k-1} u_{k} X_{k-1} \wedge \star du_{k+1}.
\end{alignat*}
Hence by Proposition \ref{integral identity for boundary determination 2} we conclude that $X_{k-1}(\tau) = X_{k-1}(\nu) = 0$ along $\Gamma$. The boundary integral identity now reduces to
\begin{alignat*}{2}
\int_{M} (k+1)i u_{k+1} X_{k-1} \wedge \star d(u_1 \cdots u_{k}) = \int_{M} u_{1} \cdots u_{k} u_{k+1} \star (kid^{\star} X_{k-1} +V_{k}).
\end{alignat*}
Integration by parts lead to 
\begin{alignat*}{2}
\int_{M} (k+1)i u_{1} \cdots u_{k} X_{k-1} \wedge \star d u_{k+1} = \int_{M} u_1 \cdots u_{k} u_{k+1} \star (id^{\star}X_{k-1}-V_{k}).
\end{alignat*}
Hence Proposition \ref{identification} combined with the result of \cite{measurezero} implies $X_{k-1} = 0$ and $V_{k} = 0$ as before. By induction, $X_{k} = 0$ and $V_{k} = 0$ for all $k \geq 1$. It follows that $(X_1, V_1) = (X_{2},V_2)$ and we are done.
\end{proof}

\end{document}